\newtheorem{theorem}{Theorem}[section]
\newtheorem{proposition}[theorem]{Proposition}
\newtheorem{lemma}[theorem]{Lemma}
\newtheorem{corollary}[theorem]{Corollary}
\newtheorem{question}[theorem]{Question}
\theoremstyle{definition}
\numberwithin{equation}{section}
\begin{document}

\title[Moduli of vector bundles on a curve and opers]{Moduli spaces of vector bundles on a 
curve and opers}

\author[I. Biswas]{Indranil Biswas}

\address{School of Mathematics, Tata Institute of Fundamental Research,
Homi Bhabha Road, Mumbai 400005, India}

\email{indranil@math.tifr.res.in}

\author[J. Hurtubise]{Jacques Hurtubise}

\address{Department of Mathematics, McGill University, Burnside
Hall, 805 Sherbrooke St. W., Montreal, Que. H3A 2K6, Canada}

\email{jacques.hurtubise@mcgill.ca}

\author[V. Roubtsov]{Vladimir Roubtsov}

\address{UNAM, LAREMA UMR 6093 du CNRS, Universit\'e d'Angers, 49045 Cedex 01,
Angers, France}

\email{vladimir.roubtsov@univ-angers.fr}

\subjclass[2010]{14H60, 16S32, 14D21, 53D30}

\keywords{Stable bundle, moduli space, connection, opers}

\date{}

\begin{abstract}
Let $X$ be a compact connected Riemann surface of genus $g$, with $g\, \geq\,2$, and let 
$\xi$ be a holomorphic line bundle on $X$ with $\xi^{\otimes 2}\,=\, {\mathcal O}_X$. Fix 
a theta characteristic $\mathbb L$ on $X$. Let ${\mathcal M}_X(r,\xi)$ be the moduli space 
of stable vector bundles $E$ on $X$ of rank $r$ such that $\bigwedge^r E\,=\, \xi$ and $H^0(X,\, 
E\otimes{\mathbb L})\,=\, 0$. Consider the quotient of ${\mathcal M}_X(r,\xi)$ by the 
involution given by $E\, \longmapsto\, E^*$. We construct an algebraic morphism from this 
quotient to the moduli space of ${\rm SL}(r,{\mathbb C})$ opers on $X$. Since $\dim 
{\mathcal M}_X(r,\xi)$ coincides with the dimension of the moduli space of ${\rm 
SL}(r,{\mathbb C})$ opers, it is natural to ask about the injectivity and surjectivity of 
this map.
\end{abstract}

\maketitle

\section{Introduction}\label{sec1}

Opers were introduced by Beilinson and Drinfeld \cite{BD1}, \cite{BD2}.
Our aim here is to construct ${\rm SL}(n, {\mathbb C})$ opers from stable vector bundles of degree zero.
While a stable vector bundle of degree zero has a unique unitary flat connection, unitary connections are
never an oper.

Take a compact connected Riemann surface of genus $g$, with $g\, \geq\, 2$, and fix a theta characteristic
$\mathbb L$ on $X$. Let ${\mathcal M}_X(r)$ be the moduli space of stable vector bundles $E$ of rank $r$ and
degree zero on $X$ such that $H^0(X,\, E\otimes {\mathbb L})\,=\, 0$. For $i\,=\, 1,\, 2$, the projection
$X\times X\,\longrightarrow\, X$ to the $i$-th factor is denoted by $p_i$. The diagonal divisor in $X\times
X$ is denoted by $\Delta$; it is identified with $X$ by $p_i$. For any $E\, \in\,
{\mathcal M}_X(r)$, there is a unique section
$$
{\mathcal A}_E\, \in\, H^0(X\times X,\, p^*_1 (E\otimes{\mathbb L})\otimes p^*_2
(E^*\otimes{\mathbb L})\otimes{\mathcal O}_{X\times X}(\Delta))
$$
whose restriction to $\Delta$ is ${\rm Id}_E$ (using the identification of $\Delta$ with $X$).

Using ${\mathcal A}_E$, we construct an ${\rm SL}(n,{\mathbb C})$ oper on $X$ for every 
$n\, \geq\, 2$; see Theorem \ref{thm1} and Proposition \ref{prop1}. Related
construction of opers from vector bundles were carried out in \cite{BB}.

Let ${\rm Op}_X(n)$ denote the moduli space of ${\rm SL}(n,{\mathbb C})$ opers on $X$. The above mentioned map
$$
{\mathcal M}_X(r) \, \longrightarrow\, {\rm Op}_X(n)
$$
factors through the quotient of ${\mathcal M}_X(r)$ by the involution $\mathcal I$ defined by $E\, \longmapsto\, E^*$.

Fix a holomorphic line bundle $\xi$ on $X$ such that $\xi^{\otimes 2}\,=\, {\mathcal O}_X$. Let
$$
{\mathcal M}_X(r, \xi)\, \subset\, {\mathcal M}_X(r)
$$
be the subvariety defined by the locus of all $E$ such that $\bigwedge^r E\,=\, \xi$. We have
$$
\dim {\mathcal M}_X(r,\xi)/{\mathcal I}\,=\, (r^2-1)(g-1)\,=\, \dim {\rm Op}_X(r)\, .
$$
We end with a question (see Question \eqref{q1}).

\section{Vector bundles with trivial cohomology}

Let $X$ be a compact connected Riemann surface of genus $g$, with $g\, \geq\, 2$. The holomorphic
cotangent bundle of $X$ will be denoted by $K_X$. Fix a theta characteristic $\mathbb L$ on
$X$. So, $\mathbb L$ is a holomorphic line bundle on $X$ of degree $g-1$, and
${\mathbb L}\otimes \mathbb L$ is holomorphically isomorphic to $K_X$.

For any $r\, \geq\, 1$, let $\widetilde{\mathcal M}_X(r)$ denote the moduli space of stable vector
bundles on $X$ of rank $r$ and degree zero. It is an irreducible smooth complex quasiprojective
variety of dimension $r^2(g-1)+1$. Let
\begin{equation}\label{e1}
{\mathcal M}_X(r)\, \subset\, \widetilde{\mathcal M}_X(r)
\end{equation}
be the locus of all vector bundles $E\, \in\, \widetilde{\mathcal M}_X(r)$ such that $H^0(X,\, E\otimes
{\mathbb L})\,=\, 0$. From the semicontinuity theorem, \cite[p.~288, Theorem 12.8]{Ha}, we know that
${\mathcal M}_X(r)$ is a Zariski open subset of $\widetilde{\mathcal M}_X(r)$. In fact, ${\mathcal M}_X(r)$
is known to be the complement of a theta divisor on $\widetilde{\mathcal M}_X(r)$ \cite{La}. For any
$E\, \in\, \widetilde{\mathcal M}_X(r)$, the Riemann--Roch theorem says
$$
\dim H^0(X,\, E\otimes {\mathbb L}) - \dim H^1(X,\, E\otimes {\mathbb L})\,=\, 0;
$$
so $H^0(X,\, E\otimes {\mathbb L})\,=\,0$ if and only if we have $H^1(X,\, E\otimes {\mathbb L})\,=\,0$.

We will now recall a construction from \cite{BH1}, \cite{BH2}.

For $i\,=\, 1,\, 2$, let $p_i\, :\, X\times X\, \longrightarrow\, X$ be the projection to the
$i$-th factor. Let
$$
\Delta\, :=\, \{(x,\, x)\, \in\, X\times X\,\, \mid\,\, x\, \in\, X\} \, \subset\, X\times X
$$
be the reduced diagonal divisor. We will identify $\Delta$ with $X$ using the map
$x\, \longmapsto\, (x,\, x)$. Using this identification, the restriction of the line bundle
${\mathcal O}_{X\times X}(\Delta)$ to $\Delta\, \subset\, X\times X$ gets identified with
the holomorphic tangent bundle $TX$ by the Poincar\'e adjunction formula \cite[p.~146]{GH}.

Take any $E\,\in\, {\mathcal M}_X(r)$. The restriction of the vector bundle
$$
p^*_1 (E\otimes{\mathbb L})\otimes p^*_2 (E^*\otimes{\mathbb L})\otimes{\mathcal O}_{X
\times X}(\Delta)
$$
to $\Delta$ is identified with the vector bundle $\text{End}(E)$ on $X$. Indeed, this follows
immediately from the following facts:
\begin{itemize}
\item the restriction of $(p^*_1 E)\otimes (p^*_2 E^*)$ to
$\Delta$ is identified with the vector bundle $\text{End}(E)$ on $X$, and

\item the above identification of ${\mathcal O}_{X\times X}(\Delta)\vert_\Delta$ with $TX$ produces an
identification of $(p^*_1 {\mathbb L})\otimes (p^*_2 {\mathbb L})\otimes{\mathcal O}_{X
\times X}(\Delta)\vert_\Delta$ with $K_X\otimes TX\,=\, {\mathcal O}_X$.
\end{itemize}
Consequently, we have the following short exact sequence of sheaves on $X\times X$:
\begin{equation}\label{e2}
0\, \longrightarrow\, p^*_1 (E\otimes{\mathbb L})\otimes p^*_2 (E^*\otimes{\mathbb L})
\longrightarrow\, p^*_1 (E\otimes{\mathbb L})\otimes p^*_2 (E^*\otimes{\mathbb L})\otimes{\mathcal O}_{X
\times X}(\Delta) \, \longrightarrow\, \text{End}(E) \, \longrightarrow\, 0\, ,
\end{equation}
where $\text{End}(E)$ is supported on $\Delta$, using the identification of $\Delta$ with $X$. For $k\,=\, 0,\, 1$,
since $H^k(X,\, E\otimes{\mathbb L})\,=\, 0$, the Serre duality implies that
$H^{1-k}(X,\, E^*\otimes{\mathbb L})\,=\, 0$. Using K\"unneth formula,
$$
H^j(X\times X,\, p^*_1 (E\otimes{\mathbb L})\otimes p^*_2 (E^*\otimes{\mathbb L}))\,=\, 0
$$
for $j\,=\, 0,\, 1,\, 2$. Therefore, the long exact sequence of cohomologies for the short exact
sequence of sheaves in \eqref{e2} gives
$$
0\,=\,
H^0(X\times X,\, p^*_1 (E\otimes{\mathbb L})\otimes p^*_2 (E^*\otimes{\mathbb L}))\, \longrightarrow\,
H^0(X\times X,\, p^*_1 (E\otimes{\mathbb L})\otimes p^*_2 (E^*\otimes{\mathbb L})\otimes{\mathcal O}_{X
\times X}(\Delta))
$$
\begin{equation}\label{e3}
\stackrel{\gamma}\longrightarrow\, H^0(X,\, \text{End}(E)) \,
\longrightarrow\, H^1(X\times X,\, p^*_1 (E\otimes{\mathbb L})\otimes p^*_2 (E^*\otimes{\mathbb L}))\,=\,0\, .
\end{equation}
So the homomorphism $\gamma$ in \eqref{e3} is actually an isomorphism. For this isomorphism $\gamma$, let
\begin{equation}\label{e4}
{\mathcal A}_E\, :=\, \gamma^{-1}({\rm Id}_E)\, \in\, H^0(X\times X,\, p^*_1 (E\otimes{\mathbb L})\otimes p^*_2
(E^*\otimes{\mathbb L})\otimes{\mathcal O}_{X\times X}(\Delta))
\end{equation}
be the section corresponding to the identity automorphism of $E$.

\section{A section around the diagonal}

Using the section ${\mathcal A}_E$ in \eqref{e4} we will construct a section of $(p^*_1 {\mathbb L})\otimes
(p^*_2 {\mathbb L})\otimes{\mathcal O}_{X\times X}(\Delta)$ on an analytic neighborhood of the diagonal
$\Delta$. For that, we first recall a description of the holomorphic differential operators on $X$.

\subsection{Differential operators}

Fix holomorphic vector bundles $V,\, W$ on $X$, and also fix an integer $d\, \geq\, 1$. The ranks of
$V$ and $W$ are denoted by $r$ and $r'$ respectively.
Let $\text{Diff}^d_X(V,\, W)$ denote the holomorphic vector bundle on $X$ of rank $rr'(d+1)$ corresponding to
the sheaf of differential operators of degree $d$ from $V$ to $W$. We recall that
$\text{Diff}^d_X(V,\, W)\,=\, W\otimes J^d(V)^*$, where
$$
J^d(V)\, :=\, p_{1*}\left((p^*_2V)/((p^*_2V)\otimes {\mathcal O}_{X\times X}(-(d+1)\Delta))\right)
\, \longrightarrow\, X
$$
is the $d$-th order jet bundle for $V$.

We have a short exact sequence of coherent analytic sheaves on $X\times X$
\begin{gather}
0\, \longrightarrow\, (p^*_1 W)\otimes p^*_2(V^*\otimes K_X)\, \longrightarrow\,(p^*_1 W)\otimes p^*_2(V^*\otimes K_X)
\otimes {\mathcal O}_{X\times X}((d+1){\Delta})\nonumber\\
\longrightarrow\, {\mathcal Q}_d(V,\,W)\, :=\,
\frac{(p^*_1 W)\otimes p^*_2(V^*\otimes K_X)
\otimes {\mathcal O}_{X\times X}((d+1){\Delta})}{(p^*_1 W)\otimes p^*_2(V^*\otimes K_X)} \, \longrightarrow\, 0\, ;
\label{e5}
\end{gather}
the support of the quotient sheaf ${\mathcal Q}_d(V,\,W)$ in \eqref{e5}
is $(d+1){\Delta}$. The direct image
\begin{equation}\label{e6}
{\mathcal K}_d(V,\,W)\, :=\, p_{1*} {\mathcal Q}_d(V,\,W) \, \longrightarrow\, X
\end{equation}
is a holomorphic vector bundle on $X$ of rank $rr'(d+1)$. It is known that
\begin{equation}\label{e7}
{\mathcal K}_d(V,\,W)\,=\, \text{Hom}(J^d(V),\, W)\,=\, \text{Diff}^d_X(V,\, W)\, ,
\end{equation}
where ${\mathcal K}_d(V,\,W)$ is the vector bundle constructed in \eqref{e6}
(see \cite[Section 2.1]{BS}).

Note that $R^1p_{1*} {\mathcal Q}_d(V,\,W)\,=\, 0$, because ${\mathcal Q}_d(V,\,W)$ is supported
on $(d+1){\Delta}$. We have $H^0(X,\, p_{1*} {\mathcal Q}_d(V,\,W))\,=\, H^0(X\times X,\, {\mathcal Q}_d(V,\,W))$.
So from \eqref{e6} and \eqref{e7} it follows that
\begin{equation}\label{e7a}
H^0(X,\, \text{Diff}^d_X(V,\, W))\, =\, H^0(X\times X,\, {\mathcal Q}_d(V,\,W))\, .
\end{equation}

The restriction of the vector bundle $(p^*_1 W)\otimes p^*_2(V^*\otimes K_X)
\otimes {\mathcal O}_{X\times X}((d+1){\Delta})$ to $\Delta\, \subset\, X\times X$ is
$\text{Hom}(V,\, W)\otimes (TX)^{\otimes d}$, because the restriction of
${\mathcal O}_{X\times X}({\Delta})$ to $\Delta$ is $TX$. Therefore, we get a surjective homomorphism
\begin{gather}
{\mathcal K}_d(V,\,W)\, \longrightarrow\, p_{1*}\left(\frac{(p^*_1 W)\otimes p^*_2(V^*\otimes K_X)
\otimes {\mathcal O}_{X\times X}((d+1){\Delta})}{(p^*_1 W)\otimes p^*_2(V^*\otimes K_X)
\otimes {\mathcal O}_{X\times X}(d{\Delta})}\right)\nonumber\\
\,=\, \text{Hom}(V,\, W)\otimes (TX)^{\otimes d}\, ,\nonumber
\end{gather}
where ${\mathcal K}_d(V,\,W)$ is constructed in \eqref{e6}.
Using \eqref{e7}, this gives a surjective homomorphism
\begin{equation}\label{e8}
\text{Diff}^d_X(V,\, W)\, \longrightarrow\,\text{Hom}(V,\, W)\otimes (TX)^{\otimes d}\, .
\end{equation}
The homomorphism in \eqref{e8} is known as the \textit{symbol map}.

\subsection{Construction of a connection}

Consider the de Rham differential operator $d\, :\, {\mathcal O}_X\, \longrightarrow\, K_X$. Using the
isomorphism in \eqref{e7a} this $d$ produces a section
$$d_1\, \in\, H^0(X\times X,\, {\mathcal Q}_1({\mathcal O}_X,\,K_X)).
$$
{}From \eqref{e5} we conclude that $d_1$ is a section of
$(p^*_1 K_X)\otimes (p^*_2 K_X)\otimes {\mathcal O}_{X\times X}(2{\Delta})$ over
$2{\Delta}$. The restriction of $d_1$ to $\Delta\, \subset\,2\Delta$ is the section of ${\mathcal O}_X$
given by the constant function $1$ (see \eqref{e8}); note that the symbol of the differential operator $d$ is the
constant function $1$.

As before, ${\mathbb L}$ is a theta characteristic on $X$. There is a unique section
\begin{equation}\label{e9}
\delta\,\in\, H^0(2{\Delta},\,(p^*_1{\mathbb L})\otimes (p^*_2{\mathbb L})\otimes {\mathcal O}_{X\times X}({\Delta}))
\end{equation}
such that
\begin{enumerate}
\item $\delta\otimes\delta\,=\, d_1$, and

\item the restriction of $\delta$ to $\Delta\, \subset\,2\Delta$ is the constant function $1$
(note that since the restriction of ${\mathcal O}_{X\times X}({\Delta})$ to $\Delta$ is $TX$, the
restriction of $(p^*_1{\mathbb L})\otimes (p^*_2{\mathbb L})\otimes {\mathcal O}_{X\times X}({\Delta})$
to $\Delta$ is $K_X\otimes TX\,=\, {\mathcal O}_X$).
\end{enumerate}
See \cite[p.~754, Theorem 2.1(b)]{BR} for an alternative construction of $\delta$.

There is a unique section
\begin{equation}\label{e10}
\Phi_E\,\in\, H^0(2{\Delta},\, (p^*_1 E)\otimes (p^*_2 E^*))
\end{equation}
such that $({\mathcal A}_E)\big\vert_{2\Delta}\, =\, \Phi_E\otimes \delta$, where ${\mathcal A}_E$ and $\delta$
are the sections in \eqref{e4} and \eqref{e9} respectively. Indeed, this follows immediately from the
fact that the section $\delta$ is nowhere zero, so $\delta^{-1}$ is a holomorphic section of $((p^*_1{\mathbb L})
\otimes (p^*_2{\mathbb L})\otimes {\mathcal O}_{X\times X}({\Delta}))^*\big\vert_{2\Delta}$. Now set
$$
\Phi_E\,=\, (({\mathcal A}_E)\big\vert_{2\Delta})\otimes \delta^{-1}
$$
and consider it as a section of $((p^*_1 E)\otimes (p^*_2 E^*))\big\vert_{2\Delta}$ using the natural
duality pairing
$$
((p^*_1{\mathbb L})
\otimes (p^*_2{\mathbb L})\otimes {\mathcal O}_{X\times X}({\Delta}))\big\vert_{2\Delta}\otimes 
((p^*_1{\mathbb L})
\otimes (p^*_2{\mathbb L})\otimes {\mathcal O}_{X\times X}({\Delta}))^*\big\vert_{2\Delta}
\, \longrightarrow\, {\mathcal O}_{2\Delta}\,.
$$

Since the restriction of ${\mathcal A}_E$ to $\Delta$ is ${\rm Id}_E$ (see \eqref{e4}), and the 
restriction of $\delta$ to $\Delta$ is the constant function $1$, it follows that the
restriction of the section $\Phi_E$ in \eqref{e10} to $\Delta$ is ${\rm Id}_E$. Therefore, $\Phi_E$
defines a holomorphic connection on $U$, which will be denoted by $D^E$. To describe the connection $D^E$
explicitly, take an open subset $U\, \subset\, X$ and a holomorphic section
$s\, \in\, H^0(U,\, E\big\vert_U)$. Consider the section $\Phi_E\otimes p^*_2 s$ over ${\mathcal U}\, :=\,
(2\Delta)\bigcap (U\times U)$. Using the natural pairing $E^*\otimes E\, \longrightarrow\, {\mathcal O}_X$, it produces
a section of $((p^*_1 E)\otimes (p^*_2{\mathcal O}_X))\big\vert_{\mathcal U}
\,=\,(p^*_1 E)\big\vert_{\mathcal U}$; denote this section of $(p^*_1 E)\big\vert_{\mathcal U}$ by $\widetilde{s}$. Since
$\Phi_E\big\vert_{\Delta}\,=\, {\rm Id}_E$, we know that $\widetilde{s}$ and $p^*_1 s$ coincide on
$\Delta\bigcap (U\times U)$. So
$$
\widetilde{s}- (p^*_1 s)\big\vert_{2\Delta}\, \in\, H^0(U,\, E\otimes K_X)\, ;
$$
the Poincar\'e adjunction formula identifies $K_X$ with the restriction of the line bundle
${\mathcal O}_{X\times X}(-\Delta)$ to $\Delta$. Then we have
\begin{equation}\label{e11}
D^E (s) \,=\, \widetilde{s}- (p^*_1 s)\big\vert_{2\Delta} \, \in\, H^0(U,\, E\otimes K_X)\, .
\end{equation}
It is straightforward to check that $D^E$ satisfies the Leibniz identity thus making it a holomorphic
connection on $E$.

\section{Opers from vector bundles}

We will construct an ${\rm SL}(n,{\mathbb C})$-oper on $X$, for every $n\, \geq\, 2$,
from the section ${\mathcal A}_E$ in \eqref{e4}.

We will use that any holomorphic connection on a holomorphic bundle over $X$ is integrable (same as flat)
because $\Omega^2_X\,=\, 0$.

As before, take any $E\, \in\, {\mathcal M}_X(r)$. Consider the holomorphic connection $D^E$ on
$E$ in \eqref{e11}. Let $U\, \subset\, X$ be a simply connected open subset, and let $x_0\, \in\, U$
be a point. Since the connection $D^E$ is integrable, using parallel translations, for $D^E$, along paths
emanating from $x_0$ we get a holomorphic isomorphism of $E\big\vert_U$ with the trivial vector bundle
$U\times E_{x_0}\, \longrightarrow\, U$. This isomorphism takes the connection $D^E\big\vert_U$
on $E\big\vert_U$ to the trivial connection on the trivial bundle. Let
$$
\Delta\, \subset\, {\mathcal U}\, \subset\, X\times X
$$
be an open neighborhood of $\Delta$ that admits a deformation retraction to $\Delta$. For $i\,=\,1,\, 2$,
the restriction of the projection $p_i\, :\, X\times X\,\longrightarrow\, X$ to the open subset
${\mathcal U}\, \subset\, X\times X$ will be denoted by $q_i$.
There is a unique holomorphic isomorphism over ${\mathcal U}$
\begin{equation}\label{e12}
f \, :\, q^*_1 E\, \longrightarrow\, q^*_2 E
\end{equation}
that satisfies the following two conditions:
\begin{enumerate}
\item the restriction of $f$ to $\Delta$ is the identity map of $E$, and

\item $f$ takes the connection $q^*_1 D^E$ on $q^*_1 E$ to the connection $q^*_2 D^E$ on $q^*_2 E$.
\end{enumerate}
Since the inclusion map $\Delta\, \hookrightarrow\, {\mathcal U}$ is a homotopy equivalence, the flat
vector bundle $(E,\, D^E)$ on $X\,=\,\Delta$ has a unique extension to a flat vector bundle on
${\mathcal U}$. On the other hand, both $(q^*_1 E,\, q^*_1 D^E)$ and $(q^*_2 E,\, q^*_2 D^E)$ are extensions of 
$(E,\, D^E)$. Therefore, there is a unique holomorphic isomorphism $f$ as in \eqref{e12}
satisfying the above two conditions.

Using the isomorphism $f$ in \eqref{e12}, the section ${\mathcal A}_E$ in \eqref{e4} produces a
holomorphic section 
\begin{equation}\label{e13}
{\mathcal A}'_E \, \in\, H^0({\mathcal U},\, p^*_1 (E\otimes E^*\otimes{\mathbb L})\otimes (p^*_2
{\mathbb L})\otimes{\mathcal O}_{X\times X}(\Delta))\, .
\end{equation}
Consider the trace pairing
$$
tr\,\, :\,\, E\otimes E^*\,=\, \text{End}(E) \, \longrightarrow\, {\mathcal O}_X\, , \ \ B\, \longmapsto\,
\frac{1}{r}\text{trace}(B);
$$
recall that $r\,=\, \text{rank}(E)$. Note that $r\cdot tr$ is the natural pairing
$E\otimes E^*\, \longrightarrow\, {\mathcal O}_X$. Using $tr$, the section ${\mathcal A}'_E$ in \eqref{e13}
produces a section
\begin{equation}\label{e14}
\widehat{\beta}_E\, \in\, H^0({\mathcal U},\, (p^*_1 {\mathbb L})\otimes (p^*_2
{\mathbb L})\otimes{\mathcal O}_{X\times X}(\Delta))\, .
\end{equation}

The following lemma is straightforward to prove.

\begin{lemma}\label{lem1}
The restriction of the section $\widehat{\beta}_E$ (in \eqref{e14}) to $2\Delta\, \subset\,
{\mathcal U}$ coincides with the section $\delta$ in \eqref{e9}.
\end{lemma}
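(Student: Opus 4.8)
The plan is to restrict everything to $2\Delta$ and to track how the two operations defining $\widehat{\beta}_E$ in \eqref{e14} act on the known restriction $\mathcal{A}_E\vert_{2\Delta}\,=\,\Phi_E\otimes\delta$ coming from \eqref{e10}. The section $\widehat{\beta}_E$ is obtained from $\mathcal{A}_E$ by first using $f$ of \eqref{e12} to transport the $p^*_2E^*$ factor to a $p^*_1E^*$ factor, and then contracting the resulting $p^*_1(E\otimes E^*)=p^*_1\mathrm{End}(E)$ factor by $tr$. The three factors $(p^*_1{\mathbb L})\otimes(p^*_2{\mathbb L})\otimes\mathcal{O}_{X\times X}(\Delta)$ are carried along untouched by both operations, so $\delta$ is a passive spectator and the whole computation reduces to the $\mathrm{End}$-valued factor. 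Viewing $\Phi_E$ as the homomorphism $p^*_2E\to p^*_1E$ it represents, the isomorphism $f\colon p^*_1E\to p^*_2E$ turns $\Phi_E$ into the endomorphism $\Phi_E\circ f$ of $p^*_1E$, so that $\widehat{\beta}_E\vert_{2\Delta}\,=\,\tfrac{1}{r}\,\mathrm{trace}(\Phi_E\circ f)\cdot\delta$. Hence, since $tr(\mathrm{Id}_E)=1$, the lemma is equivalent to the sharper identity $\Phi_E\circ f\,=\,\mathrm{Id}$ on $2\Delta$, that is, $\Phi_E\,=\,f^{-1}$ there.

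To prove $\Phi_E\,=\,f^{-1}$ on $2\Delta$, I would interpret both sides as maps of fibres near the diagonal and compare them to first order. First, unwinding \eqref{e12}: trivializing $(E,\,D^E)$ by $D^E$-parallel transport on a simply connected neighbourhood turns both $q^*_1D^E$ and $q^*_2D^E$ into the trivial connection, so the unique connection-preserving $f$ with $f\vert_\Delta=\mathrm{Id}$ becomes the constant identity; translating back, $f(x,y)\colon E_x\to E_y$ is exactly $D^E$-parallel transport from $x$ to $y$, whence $f^{-1}(x,y)\colon E_y\to E_x$ is parallel transport from $y$ to $x$. Second, I would read $\Phi_E$ off the defining formula \eqref{e11} of $D^E$: for a local section $s$ one has $D^E(s)\,=\,\Phi_E(p^*_2s)-p^*_1s$ on $2\Delta$. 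Therefore, if $s$ is $D^E$-parallel then $\Phi_E(x,y)(s(y))\,=\,s(x)$ on $2\Delta$; but for such $s$ one also has $s(x)\,=\,f^{-1}(x,y)(s(y))$, so the homomorphisms $\Phi_E(x,y)$ and $f^{-1}(x,y)$ agree on $s(y)$ to first order along $\Delta$.

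Because $D^E$ is flat (all holomorphic connections on $X$ are flat, as $\Omega^2_X=0$), locally defined $D^E$-parallel sections form a holomorphic frame of $E$, so their pullbacks $p^*_2s$ generate $p^*_2E$ over $2\Delta$. Since $\Phi_E$ and $f^{-1}$ agree on this generating set, they coincide as homomorphisms $p^*_2E\to p^*_1E$ on $2\Delta$; thus $\Phi_E\circ f=\mathrm{Id}$, giving $\mathrm{trace}(\Phi_E\circ f)=r$ and $\widehat{\beta}_E\vert_{2\Delta}=\delta$. The only genuinely non-formal point, which I expect to be the main obstacle, is precisely this identification of the algebraically defined $\Phi_E$ with the geometric parallel-transport inverse $f^{-1}$ to first order along $\Delta$; the spectator role of $\delta$, the bookkeeping of tensor factors, and the trace normalization are then routine.
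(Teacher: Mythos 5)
Your proposal is correct and is essentially the paper's own (one-line) argument carried out in full: the paper simply asserts that the lemma ``follows from the constructions of $\Phi_E$ and $\widehat{\beta}_E$,'' and your identity $\Phi_E\,=\,f^{-1}$ on $2\Delta$ -- proved by testing both sides against a local $D^E$-flat frame, which generates $p_2^*E$ over $2\Delta$ -- is exactly the content hidden in that assertion. The bookkeeping reducing the lemma to $tr(\Phi_E\circ f)\,=\,1$, and the normalization $tr\,=\,\tfrac{1}{r}\mathrm{trace}$, are handled correctly, so nothing is missing.
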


\begin{proof}
This follows from the constructions of $\Phi_E$ (in \eqref{e10}) and $\widehat{\beta}_E$.
\end{proof}

For any integer $k\, \geq\, 1$, the holomorphic line bundles ${\mathbb L}^{\otimes k}$
and $({\mathbb L}^{\otimes k})^*$ will be denoted by ${\mathbb L}^k$ and ${\mathbb L}^{-k}$ respectively.

\begin{theorem}\label{thm1}
Take any integer $n\, \geq\, 2$. The section $\widehat{\beta}_E$ in \eqref{e14} produces a holomorphic
connection ${\mathcal D}(E)$ on the holomorphic vector bundle $J^{n-1}({\mathbb L}^{(1-n)})$.
\end{theorem}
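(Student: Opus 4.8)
The plan is to reduce the construction of ${\mathcal D}(E)$ to producing a suitable isomorphism near the diagonal, exactly in the way the section $\Phi_E$ of \eqref{e10} produced the connection $D^E$ of \eqref{e11}. For an arbitrary holomorphic vector bundle $F$ on $X$, a holomorphic section $\Phi$ of $(p^*_1 F)\otimes (p^*_2 F^*)$ over $2\Delta$ — equivalently a homomorphism $p^*_2 F\,\longrightarrow\, p^*_1 F$ over $2\Delta$ — whose restriction to $\Delta$ is ${\rm Id}_F$ determines a holomorphic connection on $F$ by $s\,\longmapsto\, \Phi(p^*_2 s)-(p^*_1 s)\big\vert_{2\Delta}$; the difference lies in $F\otimes K_X$ because it vanishes along $\Delta$ and ${\mathcal O}_{X\times X}(-\Delta)\big\vert_\Delta\,=\,K_X$. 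Thus it suffices to manufacture such a $\Phi$ for $F\,=\,J^{n-1}({\mathbb L}^{1-n})$ out of $\widehat\beta_E$ in \eqref{e14}; the connection it determines will be ${\mathcal D}(E)$.

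First I would shrink ${\mathcal U}$ so that $\widehat\beta_E$ is nowhere vanishing, which is legitimate since $\widehat\beta_E\big\vert_\Delta\,=\,1$ by Lemma \ref{lem1} and the normalization of $\delta$ in \eqref{e9}. Near $\Delta$ the bundle $F\,=\,J^{n-1}({\mathbb L}^{1-n})$ is $p_{1*}\big((p^*_2{\mathbb L}^{1-n})\big\vert_{n\Delta}\big)$, so its fibre over a point is the space of $(n-1)$-jets of ${\mathbb L}^{1-n}$ there. I would then use $\widehat\beta_E^{\otimes(n-1)}$ — a nowhere-zero section of $(p^*_1{\mathbb L}^{n-1})\otimes (p^*_2{\mathbb L}^{n-1})\otimes{\mathcal O}_{X\times X}((n-1)\Delta)$ on ${\mathcal U}$, with a pole of order $n-1$ along $\Delta$ — as a comparison kernel: pairing it with an $(n-1)$-jet of ${\mathbb L}^{1-n}$ at one point cancels the ${\mathbb L}^{1-n}$-factor and, after absorbing the pole, yields an $(n-1)$-jet of ${\mathbb L}^{1-n}$ at an infinitesimally near point. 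Over $2\Delta$ this is exactly the required homomorphism $p^*_2 F\,\longrightarrow\, p^*_1 F$; note that comparing order-$(n-1)$ jets across the first-order neighbourhood $2\Delta$ uses $\widehat\beta_E$ only up to order $n$ along $\Delta$, and $\widehat\beta_E$ is defined on all of ${\mathcal U}\,\supset\, n\Delta$.

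Because $\widehat\beta_E\big\vert_\Delta\,=\,1$, the kernel $\widehat\beta_E^{\otimes(n-1)}$ restricts along $\Delta$ to the canonical identification, so $\Phi\big\vert_\Delta\,=\,{\rm Id}_F$; this is precisely the normalization that kills the zeroth-order term and makes the operator a connection rather than a connection plus an endomorphism. The Leibniz identity and holomorphy then follow as for $D^E$ after \eqref{e11}, the derivation being carried by the subtracted term $(p^*_1 s)\big\vert_{2\Delta}$. By Lemma \ref{lem1}, $\widehat\beta_E$ agrees with $\delta$ on $2\Delta$, so the part of ${\mathcal D}(E)$ coming from the low-order jets of the kernel reproduces the $E$-independent connection built from $\delta$; the dependence on $E$ enters only through the jets of $\widehat\beta_E$ of order exceeding $2$ along $\Delta$, so ${\mathcal D}(E)$ can vary with $E$ only once $n\,\geq\, 3$, while for $n\,=\,2$ it is the standard connection attached to $\delta$.

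The hard part will be to make the comparison homomorphism $\Phi$ genuinely well defined and to verify $\Phi\big\vert_\Delta\,=\,{\rm Id}_F$: one must check that the jet transport built from $\widehat\beta_E^{\otimes(n-1)}$ is independent of the chosen jet representatives and of the local trivialization of ${\mathbb L}$, and that it respects the jet filtration of $J^{n-1}({\mathbb L}^{1-n})$. Here the nowhere-vanishing of $\widehat\beta_E$ and the normalization $\widehat\beta_E\big\vert_\Delta\,=\,1$ are exactly what is needed. A concrete way to settle this is to fix a local coordinate $z$ and a trivialization of ${\mathbb L}$, write $\widehat\beta_E\,=\,h(z,w)/(z-w)$ with $h$ holomorphic and nowhere zero and $h(z,z)\,=\,1$ after the identifications, read off the resulting connection matrix in the induced frame of $J^{n-1}({\mathbb L}^{1-n})$, and finally check that the locally defined operators transform correctly under changes of coordinate so as to glue to a global holomorphic connection ${\mathcal D}(E)$ on $J^{n-1}({\mathbb L}^{1-n})$.
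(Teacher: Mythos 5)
Your opening reduction is correct and is indeed how the paper itself turns $\Phi_E$ in \eqref{e10} into the connection $D^E$ of \eqref{e11}: a homomorphism $p_2^*F\to p_1^*F$ over $2\Delta$ restricting to ${\rm Id}_F$ on $\Delta$ is the same thing as a holomorphic connection on $F$. But the entire content of the theorem is the construction of such a transport for $F=J^{n-1}({\mathbb L}^{1-n})$, and this is exactly the step you defer as ``the hard part,'' with a recipe that cannot work as stated because your kernel has the wrong weights. Pairing $\widehat\beta_E^{\otimes(n-1)}$ with a jet of ${\mathbb L}^{1-n}$ at $x_2$ cancels the $p_2^*$-factor and leaves an object valued in $p_1^*{\mathbb L}^{n-1}\otimes{\mathcal O}_{X\times X}((n-1)\Delta)$; the identification of this with ${\mathbb L}^{1-n}$ via Poincar\'e adjunction (${\mathcal O}_{X\times X}(\Delta)\vert_\Delta=TX={\mathbb L}^{-2}$) exists only \emph{along} $\Delta$, not on the first-order neighbourhood $2\Delta$, so ``absorbing the pole'' produces at best a value on the diagonal, never a jet at the nearby point --- and any attempt to trivialize ${\mathcal O}((n-1)\Delta)$ off the diagonal by dividing by powers of $\widehat\beta_E$ or $\delta$ simply reinstates the $p_2^*$-factor you cancelled. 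In the paper's formalism this is visible in \eqref{e5}--\eqref{e7a}: a kernel acts on (jets of) sections of $V$ only if its $p_2^*$-weight is $V^*\otimes K_X$, which for $V={\mathbb L}^{1-n}$ forces the weight ${\mathbb L}^{n+1}$, i.e.\ the $(n+1)$-st power of $\widehat\beta_E$, not the $(n-1)$-st. That is the paper's proof: $\beta^n_E:=\widehat\beta_E^{\otimes(n+1)}\big\vert_{(n+1)\Delta}$ is an order-$n$ operator ${\mathbb L}^{1-n}\to{\mathbb L}^{n+1}$ with symbol $1$ by \eqref{e7a}, and the connection is extracted from the induced splitting $\rho$ of the jet sequence via the diagram \eqref{e16}; the resulting splitting ${\mathcal D}_2$ of $J^1(J^{n-1}({\mathbb L}^{1-n}))\to J^{n-1}({\mathbb L}^{1-n})$ \emph{is} the transport $\Phi$ you want, but it is not multiplication by $\widehat\beta_E^{\otimes(n-1)}$.

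Your final paragraph also contains a claim that is false, and that should have been a warning sign about the construction: Lemma \ref{lem1} pins down $\widehat\beta_E$ only on $2\Delta$, i.e.\ its jets of order $\leq 1$ along $\Delta$, while (by your own count) transporting $(n-1)$-jets across $2\Delta$ requires $\widehat\beta_E$ up to order $n$. Hence the $E$-dependence enters through the jets of order $2,\dots,n$, i.e.\ for \emph{every} $n\geq 2$, not only for $n\geq 3$. In particular, for $n=2$ the connection is built from $\widehat\beta_E^{\otimes 3}\big\vert_{3\Delta}$, whose second-order jet along $\Delta$ varies with $E$; there is no ``standard connection attached to $\delta$,'' since $\delta$ lives only on $2\Delta$ and an order-$2$ operator cannot be assembled from data on $2\Delta$ alone. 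Were your claim true, the map $\widetilde{\Psi}$ of \eqref{tPsi} would be constant for $n=2$ and Question \ref{q1} would be vacuous for $r=2$, contrary to the point of the paper.
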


\begin{proof}
Consider the $(n+1)$-th tensor power of $\widehat{\beta}_E$
$$
(\widehat{\beta}_E)^{\otimes (n+1)}\, \in\,
H^0({\mathcal U},\, (p^*_1 {\mathbb L}^{(n+1)})\otimes (p^*_2
{\mathbb L}^{(n+1)})\otimes{\mathcal O}_{X\times X}((n+1)\Delta)),
$$
and restrict it to $(n+1)\Delta\, \subset\, {\mathcal U}$. From \eqref{e7a} we have
\begin{equation}\label{e15}
\beta^n_E \, :=\,(\widehat{\beta}_E)^{\otimes (n+1)}\big\vert_{(n+1)\Delta}\, \in\,
H^0(X,\, \text{Diff}^n_X({\mathbb L}^{(1-n)},\, {\mathbb L}^{(n+1)}))\, .
\end{equation}
The symbol of the differential operator $\beta^n_E$ in \eqref{e15} is the section of ${\mathcal O}_X$ given by the
constant function $1$. Indeed, this follows immediately from the fact that the restriction of
$\widehat{\beta}_E$ to $\Delta$ is the constant function $1$ (see Lemma \ref{lem1}).

We recall that there is a natural injective homomorphism $J^{m+n}(V)\, \longrightarrow\,
J^m(J^n(V))$ for all $m,\, n\, \geq\, 0$ and every holomorphic vector bundle $V$.
We have following commutative diagram of vector bundles on $X$
\begin{equation}\label{e16}
\begin{matrix}
0 &\longrightarrow & {\mathbb L}^{(1-n)}\otimes K^{\otimes n}_X\,=\,
{\mathbb L}^{(n+1)} &\stackrel{\iota_1}{\longrightarrow} & J^n({\mathbb L}^{(1-n)})
&\stackrel{\psi_1}{\longrightarrow} & J^{n-1}({\mathbb L}^{(1-n)}) &\longrightarrow & 0\\
&& \Big\downarrow && \,\,\, \Big\downarrow \mathbf{h} && \Big\Vert \\
0 &\longrightarrow & J^{n-1}({\mathbb L}^{(1-n)})\otimes K_X & \stackrel{\iota_2}{\longrightarrow} &
J^1(J^{n-1}({\mathbb L}^{(1-n)}))&\stackrel{\psi_2}{\longrightarrow} & J^{n-1}({\mathbb L}^{(1-n)}) &\longrightarrow & 0
\end{matrix}
\end{equation}
where the rows are exact. From \eqref{e7} we know that the differential operator $\beta^n_E$
in \eqref{e15} produces a homomorphism
$$
\rho\, :\, J^n({\mathbb L}^{(1-n)})\, \longrightarrow\, {\mathbb L}^{(n+1)}\, .
$$
Since the symbol of $\beta^n_E$ is the constant function $1$, we have
\begin{equation}\label{e17}
\rho\circ\iota_1\,=\, {\rm Id}_{{\mathbb L}^{(n+1)}}\, ,
\end{equation}
where $\iota_1$ is the homomorphism in \eqref{e16}. From \eqref{e17} it follows immediately that $\rho$
gives a holomorphic splitting of the top exact sequence in \eqref{e16}. Let
$$
{\mathcal D}_1\, :\, J^{n-1}({\mathbb L}^{(1-n)})\, \longrightarrow\, J^n({\mathbb L}^{(1-n)})
$$
be the unique holomorphic homomorphism such that
\begin{itemize}
\item $\rho\circ {\mathcal D}_1\,=\, 0$, and

\item $\psi_1\circ {\mathcal D}_1\,=\, {\rm Id}_{J^{n-1}({\mathbb L}^{(1-n)})}$, where $\psi_1$ is
the projection in \eqref{e16}.
\end{itemize}
Now consider the homomorphism
\begin{equation}\label{e18}
{\mathcal D}_2\, :=\, {\mathbf h}\circ {\mathcal D}_1\, :\, J^{n-1}({\mathbb L}^{(1-n)})\, \longrightarrow\,
J^1(J^{n-1}({\mathbb L}^{(1-n)}))\, ,
\end{equation}
where $\mathbf h$ is the homomorphism in \eqref{e16}. Since $\psi_1\circ {\mathcal D}_1\,=\,
{\rm Id}_{J^{n-1}({\mathbb L}^{(1-n)})}$, from the commutativity of \eqref{e16} it follows that
$$
\psi_2\circ {\mathcal D}_2\,=\, \psi_2\circ {\mathbf h}\circ {\mathcal D}_1\,=\, {\rm Id}_{J^{n-1}({\mathbb L}^{(1-n)})}
\circ\psi_1\circ {\mathcal D}_1\,=\, {\rm Id}_{J^{n-1}({\mathbb L}^{(1-n)})}\, ,
$$
where $\psi_2$ is the projection in \eqref{e16}. This implies that ${\mathcal D}_2$ in \eqref{e18}
gives a holomorphic splitting of the bottom exact sequence in \eqref{e16}.

Let
$$
{\mathcal D}(E)\, :\, J^1(J^{n-1}({\mathbb L}^{(1-n)}))\, \longrightarrow\,
J^{n-1}({\mathbb L}^{(1-n)})\otimes K_X
$$
be the unique holomorphic homomorphism such that
\begin{itemize}
\item ${\mathcal D}(E)\circ {\mathcal D}_2\,=\, 0$, and

\item ${\mathcal D}(E)\circ\iota_2\,=\, {\rm Id}_{J^{n-1}({\mathbb L}^{(1-n)})\otimes K_X}$, where
$\iota_2$ is the homomorphism in \eqref{e16}.
\end{itemize}
Using \eqref{e7} we know that
\begin{equation}\label{l1}
{\mathcal D}(E)\, \in\, H^0(X,\,
\text{Diff}^1_X(J^{n-1}({\mathbb L}^{(1-n)}),\, J^{n-1}({\mathbb L}^{(1-n)})\otimes K_X))\, ;
\end{equation}
from the homomorphism in \eqref{e8} and the above equality ${\mathcal D}(E)\circ\iota_2\,=\, {\rm Id}_{J^{n-1}
({\mathbb L}^{(1-n)})\otimes K_X}$ it follows that the symbol of the differential operator
${\mathcal D}(E)$ in \eqref{l1} is ${\rm Id}_{J^{n-1}({\mathbb L}^{(1-n)})}$. This implies that
${\mathcal D}(E)$ satisfies the Leibniz rule. Consequently, ${\mathcal D}(E)$ is a holomorphic connection
on the holomorphic vector bundle $J^{n-1}({\mathbb L}^{(1-n)})$.
\end{proof}

For $1\, \leq\, i\, \leq\, n-1$, consider the short exact sequence
$$
0 \, \longrightarrow\, {\mathbb L}^{(1-n)}\otimes K^{\otimes i}_X\, \longrightarrow\, J^{i}({\mathbb L}^{(1-n)})
\, \longrightarrow\, J^{i-1}({\mathbb L}^{(1-n)})\, \longrightarrow\, 0\, .
$$
Using these together with the fact that ${\mathbb L}\otimes {\mathbb L}\,=\, K_X$ it is deduced that
$$
\det J^i({\mathbb L}^{(1-n)})\,:=\, \bigwedge\nolimits^{i+1} J^i({\mathbb L}^{(1-n)})\,=\,
{\mathbb L}^{(i+1)(i+1-n)}\, .
$$
In particular, we have
$$
\det J^{n-1}({\mathbb L}^{(1-n)})\,:=\, \bigwedge\nolimits^n J^{n-1}({\mathbb L}^{(1-n)})\,=\, {\mathcal O}_X\, .
$$
So $\det J^{n-1}({\mathbb L}^{(1-n)})$ has a unique holomorphic connection whose monodromy is trivial;
it will be called the trivial connection on $\det J^{n-1}({\mathbb L}^{(1-n)})$.

\begin{proposition}\label{prop1}
The holomorphic connection on $\det J^{n-1}({\mathbb L}^{(1-n)})$ induced by the connection ${\mathcal D}(E)$
on $J^{n-1}({\mathbb L}^{(1-n)})$ (see Theorem \ref{thm1}) is the trivial connection.
\end{proposition}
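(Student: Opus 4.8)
The plan is to compute the holomorphic connection induced by ${\mathcal D}(E)$ on the determinant line $\det J^{n-1}({\mathbb L}^{(1-n)})\,=\,{\mathcal O}_X$ and to show it agrees with the de Rham operator $d$. The space of holomorphic connections on ${\mathcal O}_X$ is the affine space $d+H^0(X,\,K_X)$, so the induced connection has the form $d+\omega$ for a unique $\omega\,\in\, H^0(X,\,K_X)$. The connection $d$ (the case $\omega=0$) has trivial monodromy, and it is the only holomorphic connection on ${\mathcal O}_X$ with this property: if all periods of some $\omega$ lay in $2\pi i{\mathbb Z}$, then $\exp(-\int\omega)$ would be a single-valued nowhere-vanishing holomorphic function on the compact surface $X$, hence constant, forcing $\omega=0$. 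Thus $d$ is the trivial connection, and it suffices to prove $\omega=0$, i.e. that the trace of ${\mathcal D}(E)$ vanishes.

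Next I would identify $\omega$ with the sub-principal symbol of the operator $\beta^n_E$ of \eqref{e15}. By construction ${\mathcal D}(E)$ is the connection on $J^{n-1}({\mathbb L}^{(1-n)})$ associated, through the splittings in \eqref{e16}, to the order-$n$ operator $\beta^n_E\colon {\mathbb L}^{(1-n)}\to {\mathbb L}^{(n+1)}$ whose symbol is the constant function $1$. In the local frame of $J^{n-1}({\mathbb L}^{(1-n)})$ adapted to its jet filtration this is the companion-type connection of the scalar operator $\beta^n_E$, whose connection matrix has trace equal, up to a universal nonzero constant, to the coefficient of the next-to-top order term of $\beta^n_E$, that is, to its sub-principal symbol. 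Hence $\omega$ is a fixed nonzero multiple of this sub-principal symbol, and the assertion reduces to showing that the sub-principal symbol of $\beta^n_E$ vanishes.

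To compute it I would work in a local holomorphic coordinate $z$ on $X$, writing $z$ and $w$ for the two factors of $X\times X$, so that $\Delta=\{z=w\}$ and ${\mathcal O}_{X\times X}(\Delta)$ is locally generated by $(z-w)^{-1}$. In compatible trivializations write $\widehat{\beta}_E\,=\,b(z,w)/(z-w)$ with $b$ holomorphic. Under the identification \eqref{e7a}, the operator coming from a kernel $P(z,w)/(z-w)^{n+1}$ has symbol and sub-principal symbol given, respectively, by the coefficients of $(z-w)^0$ and $(z-w)^1$ in the Taylor expansion of $P$ about $\Delta$. Applying this to $P=b^{n+1}$, the symbol is $(b\vert_\Delta)^{n+1}\,=\,1$ and the sub-principal symbol equals $(n+1)(b\vert_\Delta)^{n}$ times the coefficient of $(z-w)$ in $b$ along $\Delta$. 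Both data are determined by $\widehat{\beta}_E$ modulo ${\mathcal O}_{X\times X}(-2\Delta)$, i.e. by $\widehat{\beta}_E\vert_{2\Delta}$, which by Lemma \ref{lem1} equals $\delta$; in particular the sub-principal symbol is independent of $E$ and is computed from $\delta$ alone.

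Finally I would show this $\delta$-contribution vanishes using $\delta\otimes\delta=d_1$. Writing $\delta=b_\delta(z,w)/(z-w)$, the relation $\delta\otimes\delta=d_1$ gives $b_\delta^2$ as the local function of $d_1$ in the squared trivialization. Since $d_1$ corresponds to the de Rham operator $d\colon {\mathcal O}_X\to K_X$, which has symbol $1$ and no zeroth-order term, the coefficient of $(z-w)$ in $b_\delta^2$ vanishes; as $b_\delta\vert_\Delta=1$, the coefficient of $(z-w)$ in $b_\delta$ vanishes too, and hence so does that of $b$, because $b\equiv b_\delta$ modulo $(z-w)^2$ by Lemma \ref{lem1}. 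Therefore the sub-principal symbol of $\beta^n_E$ is $0$, so $\omega=0$ and the induced connection on $\det J^{n-1}({\mathbb L}^{(1-n)})$ is the trivial connection. The step I expect to be the main obstacle is the second one: pinning down the exact proportionality between the trace of ${\mathcal D}(E)$ and the sub-principal symbol of $\beta^n_E$, keeping careful track of the twist by ${\mathbb L}^{(1-n)}$ in the jet bundle and of the identification ${\mathcal O}_{X\times X}(\Delta)\vert_\Delta\cong TX$, so that all the Taylor-coefficient statements above are coordinate-independent.
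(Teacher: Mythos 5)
Your proposal is correct, and its skeleton is the same as the paper's: both write the induced connection on $\det J^{n-1}({\mathbb L}^{(1-n)})\,=\,{\mathcal O}_X$ as (trivial connection)$\,+\,\omega$ with $\omega\,\in\,H^0(X,\,K_X)$, and both kill $\omega$ by invoking Lemma \ref{lem1}. The difference lies in how the reduction to Lemma \ref{lem1} is carried out, and here your route is genuinely more self-contained. The paper's proof consists of asserting, without derivation, the identity $\omega^1\,=\,(n-1)\bigl((\widehat{\beta}_E)\vert_{2\Delta}-\delta\bigr)$ and then applying Lemma \ref{lem1}; that closed formula silently encodes two separate facts, namely (i) that $\omega$ depends only on the restriction of $\widehat{\beta}_E$ to $2\Delta$ (equivalently, on the next-to-top coefficient of the operator $\beta^n_E$ of \eqref{e15}), and (ii) that the reference section $\delta$ by itself contributes zero trace. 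You prove (ii) explicitly — and this is the ingredient absent from the paper's proof — by using the defining relation $\delta\otimes\delta\,=\,d_1$ together with $\delta\vert_\Delta\,=\,1$: since the de Rham operator has symbol $1$ and no zeroth-order term, the linear Taylor coefficient of $\delta$ along $\Delta$ must vanish, hence so does the sub-principal symbol of $\beta^n_E$ after Lemma \ref{lem1}. Your step (i), the proportionality (through the splittings of \eqref{e16}) between the trace of ${\mathcal D}(E)$ and the sub-leading coefficient of $\beta^n_E$, is left at essentially the same level of rigor as the paper's asserted formula; as you note, the exact constant (your universal constant versus the paper's $n-1$) is immaterial, since only the vanishing is used. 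A further small merit: your monodromy argument (periods in $2\pi i\mathbb{Z}$ force $\omega\,=\,0$ because $\exp(-\int\omega)$ would be a global invertible holomorphic function on a compact surface) cleanly justifies the uniqueness of the trivial connection on ${\mathcal O}_X$, a point the paper takes for granted.
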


\begin{proof}
Any holomorphic connection $D$ on $\det J^{n-1}({\mathbb L}^{(1-n)})\,=\, {\mathcal O}_X$ can be uniquely
expressed as
$$
D\,=\, D_0+\omega\, ,
$$
where $D_0$ is the trivial connection on $\det J^{n-1}({\mathbb L}^{(1-n)})$ and
$\omega\, \in\, H^0(X,\, K_X)$. Let $D^1$ be the holomorphic connection on $\det J^{n-1}({\mathbb L}^{(1-n)})$
induced by the connection ${\mathcal D}(E)$ on $J^{n-1}({\mathbb L}^{(1-n)})$. Decompose it as
$$
D^1\,=\, D_0+\omega^1\, ,
$$
where $\omega^1\, \in\, H^0(X,\, K_X)$. Then
$$
\omega^1\,=\, (n-1)\cdot \left((\widehat{\beta}_E)\big\vert_{2\Delta}-\delta\right)\, ,
$$
where $\widehat{\beta}_E$ and $\delta$ are the sections in \eqref{e14} and \eqref{e9} respectively.
Note that two sections of $((p^*_1{\mathbb L})\otimes (p^*_2{\mathbb L})\otimes
{\mathcal O}_{X\times X}({\Delta}))\big\vert_{2\Delta}$ that coincide on $\Delta\, \subset\, 2\Delta$
differ by an element of $H^0(\Delta,\, ((p^*_1{\mathbb L})\otimes (p^*_2{\mathbb L}))\big\vert_{\Delta})
\,=\,H^0(X,\, K_X)$. Now from Lemma \ref{lem1} it follows that $\omega^1\,=\, 0$.
\end{proof}

Let ${\rm Op}_X(n)$ denote the moduli space of ${\rm SL}(n, {\mathbb C})$ opers on $X$ \cite{BD2}.
It is a complex affine space of dimension $(n^2-1)(g-1)$. We recall 
a description of ${\rm Op}_X(n)$. Let ${\mathcal C}_n(X)$ denote the space of all holomorphic connections
$D'$ on $J^{n-1}({\mathbb L}^{(1-n)})$ such that the holomorphic connection on
$\det J^{n-1}({\mathbb L}^{(1-n)})$ induced by $D'$ is the trivial connection on
$\det J^{n-1}({\mathbb L}^{(1-n)})\,=\, {\mathcal O}_X$. Then
$$
{\rm Op}_X(n)\,=\, {\mathcal C}_n(X)/{\rm Aut}(J^{n-1}({\mathbb L}^{(1-n)}))\, ,
$$
where ${\rm Aut}(J^{n-1}({\mathbb L}^{(1-n)}))$ denotes the group of all holomorphic automorphisms of
the vector bundle $J^{n-1}({\mathbb L}^{(1-n)})$; note that ${\rm Aut}(J^{n-1}({\mathbb L}^{(1-n)}))$ has a natural
action on ${\mathcal C}_n(X)$.

The moduli space ${\rm Op}_X(n)$ also coincides with the space of all
holomorphic differential operators 
$$
{\mathcal B}\, \in\, H^0(X,\, \text{Diff}^n_X({\mathbb L}^{(1-n)},\, {\mathbb L}^{(n+1)}))
$$
such that
\begin{enumerate}
\item the symbol of ${\mathcal B}$ is the section of ${\mathcal O}_X$ given by the constant function $1$, and

\item the sub-leading term of ${\mathcal B}$ vanishes.
\end{enumerate}

From Theorem \ref{thm1} and and Proposition \ref{prop1} we get an algebraic morphism
\begin{equation}\label{tPsi}
\widetilde{\Psi}\, :\, {\mathcal M}_X(r)\,\longrightarrow\, {\rm Op}_X(n)
\end{equation}
that sends any $E\, \in\, {\mathcal M}_X(r)$ to the image, in ${\rm Op}_X(n)$, of the
holomorphic connection ${\mathcal D}(E)$ (see Theorem \ref{thm1}).

Since $H^i(X,\, E\otimes {\mathbb L})\,=\, H^{1-i}(X,\, E^*\otimes {\mathbb L})$ (Serre duality), we have
an involution
\begin{equation}\label{ei}
{\mathcal I}\, :\,{\mathcal M}_X(r)\, \longrightarrow\, {\mathcal M}_X(r)\, ,\ \ F\, \longmapsto\, F^*\, .
\end{equation}

Let
\begin{equation}\label{et}
\tau\, :\, X\times X\, \longrightarrow\,X\times X\, ,\ \ (x_1,\, x_2)\, \longmapsto\, (x_2,\, x_1)
\end{equation}
be the involution.

\begin{proposition}\label{prop2}
For any $E\, \in\, {\mathcal M}_X(r)$, the sections
$$
{\mathcal A}_E\, \in\, H^0(X\times X,\, p^*_1 (E\otimes{\mathbb L})\otimes p^*_2
(E^*\otimes{\mathbb L})\otimes{\mathcal O}_{X\times X}(\Delta))
$$
and
$$
{\mathcal A}_{{\mathcal I}(E)}\, \in\, H^0(X\times X,\, p^*_1 (E^*\otimes{\mathbb L})\otimes p^*_2
(E\otimes{\mathbb L})\otimes{\mathcal O}_{X\times X}(\Delta))
$$
(see \eqref{e4} for ${\mathcal A}_E$ and \eqref{ei} for ${\mathcal I}$) satisfy the equation
$$
\tau^*{\mathcal A}_E\,=\, {\mathcal A}_{{\mathcal I}(E)}\, ,
$$
where $\tau$ is the involution in \eqref{et}.
\end{proposition}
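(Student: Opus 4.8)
The plan is to exploit the uniqueness property that characterizes the sections $\mathcal{A}_E$ and $\mathcal{A}_{\mathcal{I}(E)}$. Recall from \eqref{e3}--\eqref{e4} that, because the evaluation map $\gamma$ is an isomorphism, $\mathcal{A}_E$ is the \emph{unique} global section of its ambient bundle whose restriction to $\Delta$ equals $\mathrm{Id}_E$. The same construction applies verbatim to $E^*\,=\,\mathcal{I}(E)$: from $H^0(X,\,E\otimes{\mathbb L})\,=\,0$ and Serre duality one has $H^0(X,\,E^*\otimes{\mathbb L})\,=\,0$, so $E^*\,\in\,\mathcal{M}_X(r)$ and $\mathcal{A}_{\mathcal{I}(E)}$ is the unique section restricting to $\mathrm{Id}_{E^*}$ on $\Delta$. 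Consequently it suffices to verify two things: that $\tau^*\mathcal{A}_E$ lies in the ambient bundle of $\mathcal{A}_{\mathcal{I}(E)}$, and that its restriction to $\Delta$ coincides with $\mathrm{Id}_{E^*}$.

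The first point is a bookkeeping of pullbacks. Since $p_1\circ\tau\,=\,p_2$ and $p_2\circ\tau\,=\,p_1$, one has $\tau^*p_1^*(E\otimes{\mathbb L})\,=\,p_2^*(E\otimes{\mathbb L})$ and $\tau^*p_2^*(E^*\otimes{\mathbb L})\,=\,p_1^*(E^*\otimes{\mathbb L})$; and because $\Delta$ is $\tau$-invariant, $\tau^*\mathcal{O}_{X\times X}(\Delta)\,=\,\mathcal{O}_{X\times X}(\Delta)$. Reassembling the factors shows that $\tau^*$ carries the ambient bundle of $\mathcal{A}_E$ isomorphically onto $p_1^*(E^*\otimes{\mathbb L})\otimes p_2^*(E\otimes{\mathbb L})\otimes\mathcal{O}_{X\times X}(\Delta)$, which is precisely the bundle in which $\mathcal{A}_{\mathcal{I}(E)}$ lives. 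Hence $\tau^*\mathcal{A}_E$ is a legitimate candidate section.

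The second point is where the real content sits. The key observation is that $\tau$ fixes $\Delta$ pointwise, so restricting to $\Delta$ commutes with $\tau^*$ and the computation reduces to the fiberwise effect of the swap on the identification of the restricted bundle with $\mathrm{End}(E)$. Tracking the identifications used in Section~2, the factor $(p_1^*{\mathbb L})\otimes(p_2^*{\mathbb L})\otimes\mathcal{O}_{X\times X}(\Delta)|_\Delta\,=\,K_X\otimes TX\,=\,\mathcal{O}_X$ is manifestly symmetric under interchanging the two $\mathbb L$'s, so $\tau$ acts on the diagonal restriction purely through the canonical swap $E\otimes E^*\,\longrightarrow\,E^*\otimes E$. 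Under the double-dual identification $E^*\otimes E\,=\,\mathrm{End}(E^*)$, this swap sends $\mathrm{Id}_E\,=\,\sum_i e_i\otimes e_i^*$ to $\sum_i e_i^*\otimes e_i\,=\,\mathrm{Id}_{E^*}$. Verifying this compatibility of the swap with the two $\mathrm{End}$-identifications---and in particular checking that the $\mathbb L$- and $\mathcal{O}(\Delta)$-twists introduce no sign or scalar discrepancy---is the main (and essentially only) obstacle; once it is confirmed, the restriction of $\tau^*\mathcal{A}_E$ to $\Delta$ is $\mathrm{Id}_{E^*}$, and the uniqueness invoked in the first paragraph forces $\tau^*\mathcal{A}_E\,=\,\mathcal{A}_{\mathcal{I}(E)}$.
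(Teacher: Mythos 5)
Your strategy is the same as the paper's own proof: invoke the uniqueness coming from the isomorphism $\gamma$ in \eqref{e3} (a global section of the relevant bundle is determined by its restriction to $\Delta$), check that $\tau^*$ carries one ambient bundle to the other, and identify the diagonal restriction of the pulled-back section. Your pullback bookkeeping is fine, as is the remark that $E^*\in{\mathcal M}_X(r)$ and that the swap $E\otimes E^*\to E^*\otimes E$ takes ${\rm Id}_E$ to ${\rm Id}_{E^*}$. The gap is precisely the step you single out as ``the main (and essentially only) obstacle'' and then assert rather than verify: the claim that the ${\mathbb L}$- and ${\mathcal O}_{X\times X}(\Delta)$-twists introduce no sign. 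The two ${\mathbb L}$-factors are indeed harmless, but the ${\mathcal O}_{X\times X}(\Delta)$-factor is not. Since $\tau$ fixes $\Delta$ pointwise, it acts on ${\mathcal O}_{X\times X}(\Delta)\vert_\Delta\,=\,N_\Delta\,=\,TX$ through the differential of $\tau$ on the normal bundle of the diagonal, and that action is $-{\rm Id}$: in local coordinates $\tau^*(z_1-z_2)\,=\,-(z_1-z_2)$, so the local frame $1/(z_1-z_2)$ of ${\mathcal O}_{X\times X}(\Delta)$ pulls back to minus itself. Hence the induced identification of diagonal restrictions is $(-1)\otimes({\rm transpose})$, giving $(\tau^*{\mathcal A}_E)\vert_\Delta\,=\,-{\rm Id}_{E^*}$, and uniqueness then forces $\tau^*{\mathcal A}_E\,=\,-{\mathcal A}_{{\mathcal I}(E)}$, with a sign.

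That this sign is genuinely there, independently of any convention chosen for the Poincar\'e adjunction isomorphism, can be seen in the case $r\,=\,1$, $E\,=\,{\mathcal O}_X$ (with $H^0(X,\,{\mathbb L})\,=\,0$, so that $E\,\in\,{\mathcal M}_X(1)$ and ${\mathcal I}(E)\,=\,E$). By \eqref{e3} the space $H^0(X\times X,\, p^*_1{\mathbb L}\otimes p^*_2{\mathbb L}\otimes{\mathcal O}_{X\times X}(\Delta))$ is one-dimensional, spanned by ${\mathcal A}_{{\mathcal O}_X}$ (the Szeg\H{o} kernel of ${\mathbb L}$), so $\tau^*{\mathcal A}_{{\mathcal O}_X}\,=\,\pm{\mathcal A}_{{\mathcal O}_X}$. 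If the sign were $+$, then writing ${\mathcal A}_{{\mathcal O}_X}$ locally as $g(z_1,z_2)\,(p^*_1\ell\otimes p^*_2\ell)/(z_1-z_2)$, the numerator $g$ would have to be antisymmetric, hence zero on $\Delta$, contradicting ${\mathcal A}_{{\mathcal O}_X}\vert_\Delta\,=\,1$; so $\tau^*{\mathcal A}_{{\mathcal O}_X}\,=\,-{\mathcal A}_{{\mathcal O}_X}$, the classical antisymmetry of the Szeg\H{o} kernel. In fairness, the paper's own proof is equally terse at exactly this point --- it asserts without computation that $\tau^*{\mathcal A}_{{\mathcal I}(E)}$ restricts on $\Delta$ to ${\rm Id}_E$ --- so you have reproduced the paper's gap rather than created a new one; but a complete argument must confront the normal-bundle sign, and doing so shows the identity should read $\tau^*{\mathcal A}_E\,=\,-{\mathcal A}_{{\mathcal I}(E)}$, so that Proposition \ref{prop2} as literally stated (and the descent in Corollary \ref{cor1} built on it) needs to be re-examined with this sign in place.
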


\begin{proof}
We recall that ${\mathcal A}_E$ is the unique section of $p^*_1 (E^*\otimes{\mathbb L})\otimes p^*_2
(E\otimes{\mathbb L})\otimes{\mathcal O}_{X\times X}(\Delta)$ over $X\times X$ whose restriction to
$\Delta$ coincides with ${\rm Id}_E$ using the identification of $\Delta$ with $X$. Now the
restriction of $\tau^*{\mathcal A}_{{\mathcal I}(E)}$ to $\Delta$ is also ${\rm Id}_E$. So,
$\tau^*{\mathcal A}_{{\mathcal I}(E)}\,=\, {\mathcal A}_E$, which implies that
$\tau^*{\mathcal A}_E\,=\, {\mathcal A}_{{\mathcal I}(E)}$.
\end{proof}

\begin{corollary}\label{cor1}
The map $\widetilde{\Psi}$ in \eqref{tPsi} descends to a map
$$
\widetilde{\Psi}^0\, :\, {\mathcal M}_X(r)/{\mathcal I}\,\longrightarrow\,{\rm Op}_X(n)\, ,
$$
where $\mathcal I$ is the involution in \eqref{ei}.
\end{corollary}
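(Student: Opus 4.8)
The plan is to prove that the morphism $\widetilde{\Psi}$ of \eqref{tPsi} is constant on the orbits of the involution $\mathcal{I}$ of \eqref{ei}. Once this is done, the factorization through $\mathcal{M}_X(r)/\mathcal{I}$ is automatic, and the induced map $\widetilde{\Psi}^0$ is algebraic by the universal property of the quotient, the latter being a geometric quotient of a quasiprojective variety by the order-two group generated by $\mathcal{I}$. Thus the entire content reduces to the pointwise identity $\widetilde{\Psi}(E)=\widetilde{\Psi}(E^*)$ in $\mathrm{Op}_X(n)$ for every $E\in\mathcal{M}_X(r)$, and this is what I would concentrate on.

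First I would propagate the constructions of Sections 3 and 4 through the substitution $E\rightsquigarrow E^*$, taking Proposition \ref{prop2} as the input: $\tau^*\mathcal{A}_E=\mathcal{A}_{E^*}$, where $\tau$ is as in \eqref{et}. Since $\delta$ is pinned down by $\delta\otimes\delta=d_1$ and $\delta\vert_\Delta=1$, and since both $d_1$ and $\Delta$ are $\tau$-invariant, one gets $\tau^*\delta=\delta$; combined with Proposition \ref{prop2} this yields $\Phi_{E^*}=\tau^*\Phi_E$ for the sections of \eqref{e10}, so that the connection $D^{E^*}$ of \eqref{e11} is the connection dual to $D^E$. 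It then follows that the comparison isomorphism $f$ of \eqref{e12} attached to $E^*$ is the inverse transpose of the one attached to $E$, equivalently $\tau^*f=f^{-1}$. Substituting these into the definition \eqref{e14} of $\widehat{\beta}_E$, and using that the (normalized) trace is invariant under transpose and is cyclic, I expect the clean identity $\widehat{\beta}_{E^*}=\tau^*\widehat{\beta}_E$; note that this is consistent with Lemma \ref{lem1}, as $\tau^*\delta=\delta$.

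Since $\tau$ preserves every infinitesimal neighbourhood $(n+1)\Delta$ and fixes the $\tau$-invariant line bundle $(p_1^*\mathbb{L}^{(n+1)})\otimes(p_2^*\mathbb{L}^{(n+1)})\otimes\mathcal{O}_{X\times X}((n+1)\Delta)$, taking $(n+1)$-st tensor powers and restricting to $(n+1)\Delta$ turns the previous identity into $\beta^n_{E^*}=\tau^*\beta^n_E$ inside $H^0(X,\,\text{Diff}^n_X(\mathbb{L}^{(1-n)},\mathbb{L}^{(n+1)}))$, both operators having symbol the constant function $1$. The final task is to understand $\tau^*$ as an operation on this space of differential operators and to show that $\tau^*\beta^n_E$ and $\beta^n_E$ determine the same point of $\mathrm{Op}_X(n)$. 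Here $\tau^*$ is the (sign-normalized) formal adjoint, and the crucial feature is that the hypothesis that $\mathbb{L}$ is a theta characteristic provides a canonical self-duality of the oper bundle $J^{n-1}(\mathbb{L}^{(1-n)})$: its determinant is $\mathcal{O}_X$ and, because $\mathbb{L}\otimes\mathbb{L}=K_X$, its jet filtration is self-dual, which should intertwine the connection $\mathcal{D}(E)$ of Theorem \ref{thm1} built from $\beta^n_E$ with the one built from $\tau^*\beta^n_E$. I expect this last step — checking that replacing $\beta^n_E$ by $\tau^*\beta^n_E$ leaves the oper class unchanged, i.e. the compatibility of the construction of Theorem \ref{thm1} with the theta-characteristic self-duality — to be the main obstacle, and it is exactly the place where the assumption on $\mathbb{L}$ is indispensable.
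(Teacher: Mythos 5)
Your reduction of the Corollary to the pointwise identity $\widetilde{\Psi}(E)=\widetilde{\Psi}(E^*)$, and your tracing of the constructions through $\tau$, match what the paper must implicitly intend: the paper in fact gives no proof of Corollary \ref{cor1} at all, presenting it as an immediate consequence of Proposition \ref{prop2}. Up to signs, your intermediate identities are correct ($\Phi_{E^*}=\tau^*\Phi_E$, $D^{E^*}$ is the dual connection of $D^E$, $f$ gets replaced by its inverse transpose, and $\widehat{\beta}_{E^*}=\pm\,\tau^*\widehat{\beta}_E$). One sign slip worth recording: with the functorial identification $\tau^*{\mathcal O}_{X\times X}(\Delta)\cong{\mathcal O}_{X\times X}(\Delta)$, the involution $\tau$ acts by $-1$ on ${\mathcal O}_{X\times X}(\Delta)\vert_\Delta=N_\Delta$, so the normalization $\delta\vert_\Delta=1$ forces $\tau^*\delta=-\delta$ (locally $\delta$ is the antisymmetric kernel $\sqrt{dz_1}\sqrt{dz_2}/(z_1-z_2)$); the same subtlety affects Proposition \ref{prop2} as stated. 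These signs, however, cancel in the oper-class question, so this is minor.

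The genuine gap is the step you yourself flag as ``the main obstacle'', and it cannot be closed by the mechanism you propose. What you need is: the sign-normalized formal adjoint of $\beta^n_E$ defines the same point of ${\rm Op}_X(n)$ as $\beta^n_E$. This is not a soft statement, because the adjoint involution on ${\rm Op}_X(n)$ is nontrivial for every $n\geq 3$: in canonical form for $n=3$ it sends $\partial^3+q_2\partial+q_3$ to $\partial^3+q_2\partial+(q_2'-q_3)$, i.e.\ it acts by $-1$ on the odd $W$-currents. Your proposed mechanism --- the self-duality of $J^{n-1}({\mathbb L}^{(1-n)})$ coming from ${\mathbb L}\otimes{\mathbb L}=K_X$ --- proves exactly that $\bigl(J^{n-1}({\mathbb L}^{(1-n)}),\,{\mathcal D}(E^*)\bigr)$ is dual, as a flat bundle, to $\bigl(J^{n-1}({\mathbb L}^{(1-n)}),\,{\mathcal D}(E)\bigr)$; that is, $\widetilde{\Psi}(E^*)$ is the adjoint oper of $\widetilde{\Psi}(E)$, which is the relation you already have, not the equality you need. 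What would actually be required is that every oper in the image of $\widetilde{\Psi}$ is self-adjoint, and this appears to fail in general: for $r=1$ everything is computable with theta functions (${\mathcal A}_E$ is the Szeg\H{o} kernel, $D^E$ has the translation-invariant connection form $\sum_i\partial_i\log\theta(e)\,\omega_i$, and $\widehat{\beta}_E$ is governed by $\log\theta(e+u)-\langle\nabla\log\theta(e),u\rangle$), and the odd part of that expansion --- e.g.\ the cubic differential $\sum_{i,j,k}\partial^3_{ijk}\log\theta(e)\,\omega_i\omega_j\omega_k$, which cannot vanish identically because $\theta$ has zeros along Abel--Jacobi lines --- is precisely what the adjoint involution negates. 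So for $n\geq 3$ your argument (like the paper's implicit one) establishes only that $\widetilde{\Psi}\circ{\mathcal I}$ equals the adjoint involution composed with $\widetilde{\Psi}$; the Corollary is secure only for $n=2$, where the two kernels already agree on $3\Delta$ and the adjoint involution on ${\rm Op}_X(2)$ is trivial. Closing this gap --- or restricting/correcting the statement --- is a substantive mathematical issue, not a verification.
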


Let $\xi$ be a holomorphic line bundle on $X$ such that $\xi\otimes\xi\,=\, {\mathcal O}_X$; for example,
$\xi$ can be ${\mathcal O}_X$. Let
$$
{\mathcal M}_X(r,\xi)\, \subset\, {\mathcal M}_X(r)
$$
be the sub-variety consisting of all $E\, \in\, {\mathcal M}_X(r)$ such that $\bigwedge^r E\,=\, \xi$.
Since $\xi^{\otimes 2}\,=\, {\mathcal O}_X$ it follows that
$$
{\mathcal I}({\mathcal M}_X(r,\xi))\,=\, {\mathcal M}_X(r,\xi)\, ,
$$
where $\mathcal I$ is defined in \eqref{ei}. So restricting the map $\widetilde{\Psi}^0$ in
Corollary \ref{cor1} to ${\mathcal M}_X(r,\xi)/{\mathcal I}$ we get morphism
\begin{equation}\label{ep}
\Psi\, :\, {\mathcal M}_X(r,\xi)/{\mathcal I}\,\longrightarrow\, {\rm Op}_X(n)\, .
\end{equation}

We note that when $n\,=\, r$,
$$
\dim {\mathcal M}_X(r,\xi)/{\mathcal I}\,=\, (r^2-1)(g-1)\,=\, \dim {\rm Op}_X(r)\, .
$$

So it is natural to ask the following:

\begin{question}\label{q1}
When $n\,=\, r$, 
how close is the map $\Psi$ (constructed in \eqref{ep}) to being injective or surjective?
\end{question}

\section*{Data Availability}

Data sharing not applicable -- no new data generated.


\end{document}